\newtheorem{thm}{Theorem}
\newtheorem{corollary}{Corollary}
\theoremstyle{definition}
\newtheorem{definition}{Definition}
\theoremstyle{remark}
\newtheorem{remark}{Remark}
\newcommand{\crv}{\operatorname{cr}^v}
\newcommand{\R}{\mathbb{R}}
\title{Meander diagrams of virtual knots}
\author{Y. Belousov, V. Chernov, A. Malyutin, R. Sadykov}
\thanks{This work was supported by the Russian Science Foundation under grant no. 22-11-00299.}
\begin{document}
\begin{abstract}
For classical knots, there is a concept of (semi)meander diagrams; in this short note we generalize this concept to virtual knots and prove that the classes of meander and semimeander diagrams are universal (this was known for classical knots). We also introduce a new class of invariants for virtual knots --- virtual $k$-arc crossing numbers and we use Manturov projection to show that for all classical knots the virtual $k$-arc crossing number equals to the classical $k$-arc crossing number. 
\end{abstract}
\maketitle

\section*{Introduction}
A class of knot diagrams is called \emph{universal} if every knot has a diagram within this class. For classical knots, several universal classes of diagrams are known. Arguably, the most famous universal classes are the diagrams obtained as closures of braids (as stated in the classical Alexander's theorem~\cite{A1923}), and the so-called rectangular diagrams (\cite{C95}). Other examples of universal classes include clasp-diagrams (\cite{MM19}), potholders (\cite{EZHLN19}), and stars (\cite{A15}). More examples of universal classes of knot diagrams can be found in~\cite{EZHLN19}. In this paper we focus on another universal class of diagrams ---  so-called semimeander diagrams. 

A diagram of a classical knot is called \emph{semimeander} if it is composed of two smooth simple arcs. Apparently, the fact that each classical knot admits a semimeander diagram was first proved by G.~Hots in 1960 (\cite{H60}) and has been rediscovered in various forms by several authors since then. Moreover, this result admits a refinement: each classical knot also has a \emph{meander diagram}, i.e., a diagram composed of two smooth simple arcs whose common endpoints lie on the boundary of the diagram's convex hull. We refer the reader to~\cite{BM20} for a detailed historical background on semimeander and meander diagrams of classical knots. 

There are several applications of meander and semimeander diagrams; for example, they have been used to demonstrate the universality of the class of potholder diagrams (\cite{EZHLN19}), to introduce a new family of knot invariants (\cite{B20}), and to establish some new results about band surgeries (\cite{BKMMF22}).

In this paper, we extend the notion of semimeander (and meander) diagrams to the case of virtual knots. 
Virtual knots were introduced by L.~Kauffman~\cite{Kauf99} as a generalization of classical knots by allowing crossings that can be either real or virtual. Virtual knots can be viewed as knots in thickened surfaces up to isotopy and stabilization and destabilization of surfaces away from the knot projection. They can also be viewed as the equivalence classes of Gauss diagrams up to formal chord diagrams versions of the Reidemeister moves. 

Similar to classical knots, virtual knots can be represented via diagrams with a finite number of crossings, each crossing being either a classical crossing or a virtual crossing. Virtual crossings are depicted as small circles around crossing points. Two virtual knot diagrams are equivalent if they are related by a series of moves: the three classical Reidemeister moves (which apply to classical crossings) and four additional virtual Reidemeister moves (which handle virtual crossings). We refer the reader to~\cite{Kauf99} for details. 

The definition of universality is applied literally to virtual knots: a class of virtual knot diagrams is called \emph{universal} if every virtual knot has a diagram within this class. In this paper, we prove that the classes of semimeander and meander diagrams of virtual knots are universal; we also raise some questions related to these concepts. 

\section{Semimeander diagrams of virtual knots}
The notion of semimeander diagrams for virtual knots can be generalized in two ways.
\begin{definition}
    A diagram of a virtual knot is called \emph{strong semimeander} if it is composed of two smooth simple arcs.
\end{definition}

\begin{definition}\label{def: semimeander diagram}
    A diagram of a virtual knot is called \emph{semimeander} if it is composed of two smooth arcs, all self-intersections of which are virtual crossings.
\end{definition}

\begin{thm}\label{thm: strong semimeander diag exists}
    Each virtual knot has a strong semimeander diagram.
\end{thm}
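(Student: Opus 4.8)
The plan is to reformulate \emph{strong semimeander} combinatorially on the Gauss diagram and then to separate the problem into an easy (genuinely virtual) drawing step and a harder combinatorial step. A strong semimeander diagram is the same datum as a choice of two points $P,Q$ on the core curve splitting it into two \emph{simple} arcs $\alpha$ and $\beta$; since each arc is simple it has no self-crossings, so every classical crossing has exactly one strand on $\alpha$ and one on $\beta$. On the Gauss diagram (the core circle carrying the $n$ chords that record the classical crossings), the points $P,Q$ thus cut the circle into two arcs each of which meets every chord in exactly one endpoint. Equivalently, reading the Gauss code from $P$ gives a word of length $2n$ that is a concatenation $\sigma\tau$ of two permutations of the crossing set. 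So it suffices to show (i) that any such combinatorial pattern is realized by an honest planar strong semimeander diagram, and (ii) that every virtual knot admits a diagram whose Gauss code has this form.

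Step (i) is where I would use virtuality, and it is easy. Given two orderings $\sigma,\tau$ of the $n$ classical crossings, with prescribed over/under information and signs, I place $n$ points in the plane and draw $\alpha$ as a simple arc through them in the order $\sigma$ and $\beta$ as a simple arc through them in the order $\tau$, joining the two arcs at $P$ and $Q$. A simple arc visiting prescribed points in a prescribed order always exists, so this can always be carried out; the two arcs then meet at the $n$ prescribed classical crossings and possibly at finitely many additional transverse points, all of which I declare to be virtual crossings. Since a strand carrying only virtual crossings may be rerouted arbitrarily at the cost of introducing further virtual crossings, no obstruction arises, and the result is a strong semimeander diagram with the desired classical Gauss data.

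The crux is step (ii): realizing the given virtual knot by a diagram whose Gauss code is a concatenation of two permutations. First I would start from an arbitrary diagram and cut its core curve at two points into arcs $\alpha,\beta$; call a classical crossing \emph{bad} if both of its strands lie on the same arc. The goal is to remove all bad crossings by Reidemeister moves, which preserve the virtual knot type. I would induct on the number of bad crossings: a bad crossing whose two occurrences are adjacent along one arc is a curl and is deleted by a Reidemeister~I move, while a non-adjacent bad crossing $c$ on $\alpha$ is treated by a Reidemeister~II move that pushes a short sub-arc of $\alpha$ next to $c$ across one of the dividing points into $\beta$, turning one occurrence of $c$ into a genuine $\alpha$--$\beta$ crossing.

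The hard part will be making this induction actually terminate: I must choose which occurrence to move, and in which order to process the crossings (for instance working inward from $P$ and $Q$), so that each move strictly decreases the number of bad crossings and creates no new ones. This bookkeeping is the main obstacle and the step that requires genuine care. As a cross-check and an alternative route, I note that the corresponding statement for classical knots is known (Hots; see~\cite{BM20}); since that construction manipulates the core curve together with its crossing data rather than its embedding in the sphere, I expect it to transfer to the abstract Gauss diagram of a virtual knot, the only new ingredient being the virtual drawing step~(i). Finally, the meander refinement --- where $P$ and $Q$ are required to lie on the boundary of the convex hull --- should follow from the same construction by taking $P$ and $Q$ to be the extreme points of the two arcs.
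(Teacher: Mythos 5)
Your step~(i) is correct and is indeed the place where virtuality makes life easy: any Gauss diagram whose chords each have one endpoint on each of two complementary circle arcs can be drawn in the plane with both arcs simple, all unwanted intersections being declared virtual, and the resulting diagram represents the given virtual knot. The problem is step~(ii): after your reformulation it is not a reduction but the theorem itself restated, and you do not prove it. You sketch an induction on the number of bad crossings and then explicitly concede that making it terminate --- choosing which occurrence to move and in what order so that no new bad crossings appear --- is ``the main obstacle.'' That is precisely the mathematical content of the statement, so the proposal has a genuine gap at its core. The local move you invoke is also underspecified: a single Reidemeister~II move creates or deletes a \emph{pair} of crossings and does not relocate one strand of an existing crossing $c$ from $\alpha$ to $\beta$, while ``pushing a sub-arc of $\alpha$ across a dividing point into $\beta$'' amounts to moving the cut points $P,Q$, which changes the bad/good status of many crossings at once. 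Your fallback --- that the classical construction should transfer to abstract Gauss diagrams --- is plausible but is exactly the assertion that needs an argument.

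What the paper supplies, and what is missing from your plan, is the selection rule that makes the induction go through. Fix a simple arc $I$ of the diagram once and for all and aim to pull every crossing onto $I$ (then the complementary arc is automatically simple, since $I$ is). Parametrizing the diagram by $f:S^1\to D$, with $\delta=f^{-1}(I)$ and $X$ the set of parameters of crossings not lying on $I$, one can always pick $t_*\in X$ admitting an arc $c\subset S^1$ from $t_*$ to $\delta$ with $c\cap X=\{t_*\}$; then $f(c)$ is a simple sub-arc of the knot meeting only crossings that already lie on $I$, so the transversal strands it encounters are strands of $I$. Dragging the crossing $f(t_*)$ along $f(c)$ by finger moves (the $\mathfrak{X}$-moves of Fig.~\ref{fig: x-moves}, each a composition of Reidemeister moves) lands it on $I$, and every newly created crossing lies on $I$. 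Hence the number of off-$I$ crossings strictly decreases and the process terminates. Supplying this choice of crossing and path --- your missing ``bookkeeping'' --- is what turns your outline into a proof; as written, you have established only the easy realizability half.
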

\begin{proof}
    Let $D$ be a diagram of a virtual knot. Let us choose some simple arc $I$ in $D$ such that no endpoint of $I$ is a crossing. If there are any crossings (both classical or virtual) that do not lie on $I$, we make some local transformations of $D$ so that the knot does not change but the number of crossings not lying on $I$ decreases. Repeating this procedure yields a strong semimeander diagram. The involved local moves are presented in Fig.~\ref{fig: x-moves} (they are sequences of Reidemeister moves, so they do not change the equivalence class of the virtual knot). 

    \begin{figure}
    \centering
    \begin{tikzpicture}
\node (v_s) at (1,0){};
\node (v_e) at (4, 0){};

\node (x1) at (3,-0.7){};
\node (x2) at (3, 0.7){};

\node (b1) at (2,0.7){};
\node (b2) at (2,-0.7){};

\draw [line width = 3, white, double=black, double distance = 1.5] (x1.center) to (x2.center);
\draw [line width = 3, white, double=black, double distance = 1.5] (v_s.center) to (v_e.center);
\draw [line width = 3, white, double=black, double distance = 1.5] (b1.center) to (b2.center);
\end{tikzpicture}
\hspace{0.4cm}
\raisebox{0.8cm}{
\scalebox{0.88}{
\begin{tikzpicture}
\node(r) at (-0.15, -1.7) {}; 
\draw[->, thin] (-1.5,-2)--(1.2,-2);
\end{tikzpicture}}}
\hspace{0.6cm}
\begin{tikzpicture}
\node (v_s) at (1,0){};
\node (v_e) at (4, 0){};

\node (x1) at (3,-0.7){};
\node (xm1) at (1.7,-0.3){};
\node (xm2) at (1.7,0.3){};
\node (x2) at (3, 0.7){};

\node (b1) at (2,0.7){};
\node (b2) at (2,-0.7){};

\draw [line width = 3, white, double=black, double distance = 1.5] (x1.center) 
	to [out = 90, in = 0, looseness = 0.8] (xm1.center)
	to [out = 180, in = 180, looseness = 2] (xm2.center)
	to [out = 0, in = -90, looseness = 0.8] (x2.center);

\draw [line width = 3, white, double=black, double distance = 1.5] (v_s.center) to (v_e.center);
\draw [line width = 3, white, double=black, double distance = 1.5] (b1.center) to (b2.center);
\end{tikzpicture}

\begin{tikzpicture}
\node (v_s) at (1,0){};
\node (v_e) at (4, 0){};

\node (b1) at (3,-0.7){};
\node (b2) at (3, 0.7){};

\node (x1) at (2,0.7){};
\node (x2) at (2,-0.7){};

\draw [line width = 3, white, double=black, double distance = 1.5] (v_s.center) to (v_e.center);
\draw [line width = 3, white, double=black, double distance = 1.5] (b1.center) to (b2.center);
\draw [line width = 1.5] (x1.center) to (x2.center);
\draw[fill = white] (2, 0) circle [radius = 0.07]; 
\end{tikzpicture}
\hspace{0.4cm}
\raisebox{0.8cm}{
\scalebox{0.88}{
\begin{tikzpicture}
\node(r) at (-0.15, -1.7) {}; 
\draw[->, thin] (-1.5,-2)--(1.2,-2);
\end{tikzpicture}}}
\hspace{0.6cm}
\begin{tikzpicture}
\node (v_s) at (1,0){};
\node (v_e) at (4, 0){};

\node (x1) at (3,-0.7){};
\node (xm1) at (1.7,-0.3){};
\node (xm2) at (1.7,0.3){};
\node (x2) at (3, 0.7){};

\node (b1) at (2,0.7){};
\node (b2) at (2,-0.7){};

\draw [line width =  1.5] (v_s.center) to (v_e.center);

\draw [line width = 3, white, double=black, double distance = 1.5] (x1.center) 
	to [out = 90, in = 0, looseness = 0.8] (xm1.center)
	to [out = 180, in = 180, looseness = 2] (xm2.center)
	to [out = 0, in = -90, looseness = 0.8] (x2.center);
\draw [line width = 1.5] (b1.center) to (b2.center);
\draw[fill= white] (2, 0.3) circle [radius = 0.07]; 
\draw[fill= white] (2, -0.3) circle [radius = 0.07]; 
\draw[fill= white] (2, 0) circle [radius = 0.07]; 
\end{tikzpicture}

\begin{tikzpicture}
\node (v_s) at (1,0){};
\node (v_e) at (4, 0){};

\node (x1) at (3,-0.7){};
\node (x2) at (3, 0.7){};

\node (b1) at (2,0.7){};
\node (b2) at (2,-0.7){};

\draw [line width = 1.5] (v_s.center) to (v_e.center);
\draw [line width = 1.5] (b1.center) to (b2.center);
\draw [line width = 1.5] (x1.center) to (x2.center);
\draw[fill= white] (2, 0) circle [radius = 0.07]; 
\draw[fill= white] (3, 0) circle [radius = 0.07]; 
\end{tikzpicture}
\hspace{0.4cm}
\raisebox{0.8cm}{
\scalebox{0.88}{
\begin{tikzpicture}
\node(r) at (-0.15, -1.7) {}; 
\draw[->, thin] (-1.5,-2)--(1.2,-2);
\end{tikzpicture}}}
\hspace{0.6cm}
\begin{tikzpicture}
\node (v_s) at (1,0){};
\node (v_e) at (4, 0){};

\node (x1) at (3,-0.7){};
\node (xm1) at (1.7,-0.3){};
\node (xm2) at (1.7,0.3){};
\node (x2) at (3, 0.7){};

\node (b1) at (2,0.7){};
\node (b2) at (2,-0.7){};

\draw [line width = 1.5] (v_s.center) to (v_e.center);
\draw [line width = 1.5] (b1.center) to (b2.center);

\draw [line width = 1.5] (x1.center) 
	to [out = 90, in = 0, looseness = 0.8] (xm1.center)
	to [out = 180, in = 180, looseness = 2] (xm2.center)
	to [out = 0, in = -90, looseness = 0.8] (x2.center);
\draw[fill= white] (2, 0.3) circle [radius = 0.07]; 
\draw[fill= white] (2, -0.3) circle [radius = 0.07]; 
\draw[fill= white] (1.355, 0) circle [radius = 0.07]; 
\draw[fill= white] (2, 0) circle [radius = 0.07]; 
\end{tikzpicture}
    \caption{Examples of $\mathfrak{X}$-moves}
    \label{fig: x-moves}
\end{figure}
    
    Now let us describe this process in more detail. Let us choose some parametrization $f:S^1\to D$ of $D$, and let $\delta \subset S^1$ be an arc such that $f\left(\delta\right) = I$. Let $X = \{t \in S^1\,:\,f(t) \text{ is a crossing in }D\setminus I\}$. We can find $t_*\in X$ with the following property: (\textasteriskcentered) there exists an arc $c\subset S^1$ connecting $t_*$ and $\delta$ such that $c \cap X = \{t_*\}$. In this case, $f(c)$ is a simple arc in $D$ that contains only crossings lying on $I$. So we can perform a series of transformations from Fig.~\ref{fig: x-moves}. As a result, the number of crossings not lying on $I$ will decrease. 
\end{proof}

\begin{corollary}
    Each virtual knot has a semimeander diagram. 
\end{corollary}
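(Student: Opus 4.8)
The plan is to deduce the corollary directly from Theorem~\ref{thm: strong semimeander diag exists} by comparing the two definitions. The key observation is that the class of strong semimeander diagrams is contained in the class of semimeander diagrams. Indeed, in a strong semimeander diagram the two arcs are \emph{simple}, so neither arc has any self-intersection whatsoever; consequently the requirement of Definition~\ref{def: semimeander diagram} that every self-intersection of the two arcs be a virtual crossing is satisfied vacuously.

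First I would make this containment precise. Given a strong semimeander diagram $D$, write $D = a_1 \cup a_2$, where $a_1$ and $a_2$ are the two smooth simple arcs provided by the definition. Since each $a_i$ is simple, it has no self-intersection points at all, and hence the set of self-intersections of the arcs that Definition~\ref{def: semimeander diagram} asks to be virtual is empty. Thus the condition holds automatically, and $D$ qualifies as a semimeander diagram.

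With this observation in hand, the corollary is immediate: by Theorem~\ref{thm: strong semimeander diag exists} every virtual knot admits a strong semimeander diagram, which by the previous step is in particular a semimeander diagram. I do not anticipate any genuine obstacle here; the entire content is the remark that the defining condition of a semimeander diagram is weaker than that of a strong semimeander diagram, so the stronger existence result of the theorem subsumes the weaker one claimed in the corollary.
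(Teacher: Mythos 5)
Your proposal is correct and matches the paper's (implicit) argument: the corollary is stated without proof precisely because a strong semimeander diagram vacuously satisfies Definition~\ref{def: semimeander diagram}, so it follows at once from Theorem~\ref{thm: strong semimeander diag exists}.
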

\begin{thm}    
    If a virtual knot $K$ has a diagram with $n$ crossings, then there exists a strong semimeander diagram of $K$ with at most $\sqrt{3}^n$ crossings. 
\end{thm}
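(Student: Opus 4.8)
The plan is to re-run the crossing-elimination procedure from the proof of Theorem~\ref{thm: strong semimeander diag exists}, but this time bookkeeping the number of crossings that are created along the way, and to package the bookkeeping as an induction whose engine is a recursion of the form $T(n)\le 3\,T(n-2)$. Here $T(n)$ is the largest crossing number the straightening can produce over all inputs with $n$ crossings; since such a recursion together with the base values for $n\le 1$ solves to $T(n)\le 3^{n/2}=\sqrt{3}^{\,n}$, the whole theorem reduces to establishing this one multiplicative step. The base cases are immediate: a diagram with no crossing is already a simple closed curve, and a diagram with a single crossing splits into two simple arcs meeting at that crossing, so in both cases the diagram is already strong semimeander and the bound $\sqrt{3}^{\,n}$ holds trivially.

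For the inductive step I would fix a parametrization $f\colon S^1\to D$ and choose the arc $I$ and the innermost off-$I$ crossing $t_*$ exactly as in Theorem~\ref{thm: strong semimeander diag exists}, so that the connecting arc $c$ meets the set $X$ of off-$I$ crossings only at $t_*$. The cost of one elimination is the sequence of $\mathfrak{X}$-moves of Fig.~\ref{fig: x-moves} that drags $f(c)$ across $I$: in the local model each such move trades a picture with two crossings for one with four, and, crucially, the dragged finger meets not only strands of $I$ (which only creates harmless crossings lying on $I$) but possibly strands of $I^c$, each of which creates a \emph{new} self-crossing of $I^c$ that must later be eliminated in turn. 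The key quantitative input is therefore a bound on how many such new off-$I$ crossings a single elimination can spawn. To keep this bound small I would exploit the freedom to push $t_*$ along either of the two arcs of $S^1$ joining it to $\delta$, always choosing the side meeting the fewer crossings; combined with the innermost choice of $t_*$, this should let me charge the created crossings to a controlled set of existing ones and clear a second crossing in the same sweep, which is exactly what upgrades the naive estimate into the sharp factor $3$ per two crossings.

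The step I expect to be the main obstacle is precisely this cascade control. Because the finger may re-cross $I^c$, one elimination can manufacture several new off-$I$ crossings, so a careless tally gives only a bound such as $2^{n}$ or worse rather than the claimed base $\sqrt{3}$. To obtain the sharp constant I would attach a weight, or potential, to the off-$I$ crossings --- for instance counting each self-crossing of $I^c$ together with its depth along $c$ --- and prove that a single elimination performed in the cheaper direction strictly decreases this potential while multiplying the total crossing number by at most the allowed amount. Identifying a potential that simultaneously dominates the crossing count, decreases under each $\mathfrak{X}$-move, and makes the local tallies of Fig.~\ref{fig: x-moves} assemble into exactly $T(n)\le 3\,T(n-2)$ is the delicate part; once it is in place, the induction closes and yields the bound $\sqrt{3}^{\,n}$.
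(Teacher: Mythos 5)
Your overall strategy --- iterate the straightening procedure of Theorem~\ref{thm: strong semimeander diag exists} while bookkeeping new crossings, aiming at a multiplicative cost of $3$ for every two crossings eliminated --- is the paper's strategy in outline, but the proposal locates the difficulty in the wrong place and leaves the decisive steps unproved. The ``cascade'' you devote most of your effort to does not occur: by the choice of $t_*$ with property (\textasteriskcentered), the arc $f(c)$ contains no crossing of $D\setminus I$ other than $f(t_*)$, so every crossing the finger passes is a crossing of $f(c)$ with $I$, each $\mathfrak{X}$-move creates two new crossings \emph{lying on $I$}, and no new off-$I$ crossings are ever produced. Hence no potential function or charging scheme is needed, and the ``delicate part'' you defer to future work is a non-problem. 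What the argument actually requires, and what is absent from your write-up, is the quantitative content: (i) a seed for the recursion --- the paper passes to a reduced diagram and invokes \cite[Theorem~2]{BM17} to start from a simple arc $I$ already carrying $k\ge\min\{6,n\}$ crossings, which is what makes the final count $3^{(n-k)/2}k\le 3^{(n-6)/2}\cdot 6$ fall below $\sqrt{3}^{\,n}$; (ii) the fact that while at least two off-$I$ crossings remain one can find \emph{two} points $t_1,t_2\in X$, each satisfying (\textasteriskcentered), that can be pulled onto $I$ in one round at a total cost of at most $2k$ new crossings, $k$ being the current number of crossings on $I$ --- this needs the case analysis on whether $f(t_1)=f(t_2)$ and, in the equal case, the construction of the arcs $\gamma_1,\gamma_2$ and the choice of the cheaper one; and (iii) the bound $|f(c)\cap I|\le k/2$ for a single leftover crossing, obtained by choosing the cheaper of the two arcs joining it to $\delta$.

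Moreover, the recursion you propose, $T(n)\le 3T(n-2)$ on the total crossing number of the input, is not the right object. A purely multiplicative recursion seeded by $T(0)=0$ forces $T(n)=0$ for all even $n$, which is absurd, and in any case the cost of one round is not a function of $n$ alone. The quantity that gets multiplied by $3$ is the number of crossings currently lying on the distinguished arc $I$: it starts at $k\ge 6$, at most triples in each round, and the number of rounds is $\lfloor(n-k)/2\rfloor$, yielding $3^{(n-k)/2}k<\sqrt{3}^{\,n}$. Since your proposal neither establishes the factor $3$ per two eliminated crossings nor supplies the seed arc, and explicitly concedes that the inductive engine is missing, the proof is incomplete as written.
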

\begin{proof}
    Let $D$ be a diagram of $K$ with $n$ crossings. We can assume that $D$ is reduced (in the sense that no crossing can be eliminated through a single Reidemeister move). Then there exists a simple arc $I$ in $D$ containing $k \geq \min\{6, n\}$ crossings of $D$ (see~\cite[Theorem~2]{BM17}). If $n \leq 6$, the diagram is a strong semimeander itself, thus without loss of generality we assume that $n > 6$.
    
    We will use the notation from the proof of Theorem~\ref{thm: strong semimeander diag exists}. If $n - k = 1$, then $X$ consists of a single element and $D$ contains two distinct simple arcs connecting this element with $\delta$. In this case, we can choose an arc $c$ of these two for which the number $|f(c) \cap I|$ is the smallest (this number is at most $\frac{k}{2}$). If $n - k > 1$, then there are two distinct elements $t_1$ and $t_2$ in $X$ both having property (\textasteriskcentered). We need to consider two cases, whether $f(t_1) = f(t_2)$ or not. 
    \begin{itemize}
        \item If $f(t_1)\neq f(t_2)$, then we can simultaneously perform the discussed transformation for $t_1$ and for $t_2$. As a result, the number of crossings not lying on $I$ will decrease by two, while the number of crossings in the diagram will increase by at most $2k$. 
        \item Assume $f(t_1) = f(t_2)$. In this case, there exist two closed arcs $\gamma_1,\ \gamma_2 \subset S^1$ such that for each $i\in \{1,2\}$ 
        \begin{enumerate}
            \item $t_i \in \gamma_i$, 
            \item $|\gamma_i \cap X|=2$, 
            \item one of the endpoints of $\gamma_i$ lies in $X$, and 
            \item $\gamma_i$ and $\delta$ share a single endpoint. 
        \end{enumerate}
        Let $s_i$ be the only element in  $(\gamma_i \cap X)\setminus \{t_i\}$. Since $f(s_i) \neq f(t_i)$, the arc $f(\gamma_i)$ is simple, and we can sequentially perform a series of $\mathfrak{X}$-moves either for $f(t_1)$ and $f(s_1)$ or for $f(t_2)$ and $f(s_2)$. In order to add less crossings, we make a choice depending on which of $|f(\gamma_1)\cap I|+ |f(c_1)\cap I|$  or  $|f(\gamma_2)\cap I|+ |f(c_2)\cap I|$ is smaller, where $c_i$ is a subarc of $\gamma_i$ connecting $\delta$ and $t_i$. Thus we can decrease the number of crossings not lying on $I$ by two, while the number of crossings in the diagram will increase by at most $2k$. 
    \end{itemize}
  
    As a result, we obtain a strong semimeander diagram with at most
    $$
        3^{\lfloor\frac{n-k}{2} \rfloor}\left(1 + \frac{1}{2}\right)^{n\ \mathrm{mod}\ 2}k \leq  3^{\frac{n-k}{2}}k \leq 3^{\frac{n-6}{2}}\cdot6 < \sqrt{3}^n
    $$
    crossings.
\end{proof}

\begin{remark}    
There are more efficient algorithms for constructing semimeander diagrams of classical knots (see~\cite{B23}). These algorithms use transformations similar to flypes and therefore they likely cannot be directly applied to the case of virtual knots (as virtual flypes can change the knot type, see, for example,~\cite{ZJZ04} and~\cite{K17}). It is an interesting problem to find efficient ways to construct semimeander diagrams for virtual knots.
\end{remark}

Since it is convenient to work with virtual knots using Gauss diagrams, we think that Definition~\ref{def: semimeander diagram} is a better generalization of the classical notion of a semimeander diagram to the case of virtual knots. Indeed, a diagram $D$ of a virtual knot is semimeander if and only if the Gauss diagram constructed from $D$ can be composed of two circle arcs in such a way that each chord starts on one arc and ends on the other. 

\subsection*{Connection with bridge index}
Reformulation of the definition of a semimeander diagram in terms of Gauss diagrams indicates a connection with the virtual bridge index of a virtual knot. For example, there are non-trivial virtual knots with bridge index one (see~\cite{BC08}), and each one-bridge diagram is a semimeander diagram. For the case of two-bridge knots, it was a conjecture due to Jablan and Radovi\'{c} (see~\cite{JR15}) that each two-bridge classical knot has a minimal diagram that is semimeander. This conjecture was proved in~\cite{BM20}. It would be interesting to get a similar result for virtual knots.

\section{Meander diagrams of virtual knots}
As with semimeander diagrams, the concept of a meander diagram can be generalized in two ways.
\begin{definition}
    A diagram of a virtual knot is called \emph{strong meander} if it is composed of two smooth simple arcs whose common endpoints lie on the boundary of the convex hull of the diagram. 
\end{definition}

\begin{definition}
    A diagram of a virtual knot is called \emph{meander} if it is composed of two smooth arcs whose common endpoints lie on the boundary of the convex hull of the diagram and all self-intersections of these arcs are virtual crossings. 
\end{definition}

\begin{thm}
    Each virtual knot has a strong meander diagram.
\end{thm}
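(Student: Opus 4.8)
The plan is to feed the strong semimeander diagram produced by the previous theorem into a purely planar argument; in fact I expect to prove the sharper statement that \emph{every} strong semimeander diagram is planar-isotopic to a strong meander one. So let $D=a_1\cup a_2$ be a strong semimeander diagram of $K$, where $a_1,a_2$ are smooth simple arcs meeting exactly at their two common endpoints $p$ and $q$. Since each $a_i$ is itself simple, every crossing of $D$ (classical or virtual) is an intersection point of $a_1$ with $a_2$; the endpoints $p,q$ are ordinary smooth points of the curve, not crossings. The whole task is to relocate $p$ and $q$ to the boundary of the convex hull while keeping $a_1,a_2$ simple.

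First I would straighten $a_1$. A smooth simple arc in the plane is ambient isotopic to a straight segment, so by an ambient isotopy of $\R^2$ applied to all of $D$ — a planar isotopy of the diagram that changes neither $K$ nor the type or sign of any crossing — I may assume $a_1$ is the horizontal segment $S=[0,1]\times\{0\}$ with $p=(0,0)$ and $q=(1,0)$. Because all crossings of $D$ lay on $a_1$, afterwards every crossing lies on the line $y=0$, and $a_2$ is a simple arc from $q$ to $p$ meeting $S$ transversally at the crossing points $0<t_1<\dots<t_m<1$.

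Next I would reshape $a_2$. The portions of $a_2$ strictly above $S$ are disjoint embedded arcs with endpoints among $\{0,t_1,\dots,t_m,1\}$, so they form a non-crossing chord system on the line; the same holds below $S$, and each of $p,q$ is an endpoint of exactly one such cap since $a_2$ terminates there. Any system of disjoint arcs in a half-plane realizing a fixed non-crossing chord system with fixed endpoints can be carried, by an ambient isotopy of that half-plane fixing $S$, onto the standard nested arcs drawn close to $S$ and contained in the span of their endpoints. Carrying this out above and below $S$ fixes $a_1=S$, keeps $a_2$ simple and meeting $S$ only at the $t_i$, and is therefore again a planar isotopy of $D$. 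Afterwards the whole diagram lies in the strip $0\le x\le 1$ with $p=(0,0)$ among its leftmost points and $q=(1,0)$ among its rightmost points, so both $p$ and $q$ lie on the boundary of the convex hull; thus $D$ has become a strong meander diagram of $K$.

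The two isotopies are routine; the point I expect to require care is the claim that \emph{both} endpoints can be put on the convex hull simultaneously without spoiling simplicity, and this is precisely where straightening $a_1$ pays off. Once all crossings sit on one line, the upper and lower parts of $a_2$ are forced into non-crossing chord systems, and nesting them is the standard planar fact that confines the whole picture to a vertical strip whose two bounding lines are met only by $p$ and $q$. I would remark finally that nothing in the argument inspects the nature of the crossings, so it applies verbatim to virtual diagrams, treating virtual and classical crossings alike.
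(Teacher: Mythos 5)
There is a genuine gap, and in fact the ``sharper statement'' you announce --- that every strong semimeander diagram is planar-isotopic to a strong meander one --- is false. The flaw is in the sentence claiming that, after $a_1$ is straightened to the segment $S=[0,1]\times\{0\}$, the portions of $a_2$ split into disjoint arcs lying strictly above $S$ and arcs lying strictly below $S$ with endpoints among $\{0,t_1,\dots,t_m,1\}$. A component of $a_2\setminus S$ is only required to avoid the \emph{segment} $S$, not the whole line $y=0$; it may pass around an endpoint of $S$ and visit both half-planes. This ``going around the ends'' is exactly the phenomenon that separates semimeander from meander diagrams, and it kills the reduction to a non-crossing chord system. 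Concretely, take $a_1=S$, $p=(0,0)$, $q=(1,0)$, and let $a_2$ run from $q$ up and over the top, down the left side at $x=-1$, under the bottom, up through a single crossing at $(0.5,0)$, and then left and down to $p$ from above. This is a strong semimeander diagram with one crossing in which $p$ is adjacent only to two \emph{bounded} faces of $\mathbb{R}^2\setminus D$. Since a planar (ambient) isotopy preserves the face structure and a point on the boundary of the convex hull necessarily lies on the closure of the unbounded face, no planar isotopy can make this diagram strong meander. So your argument cannot produce the theorem without an additional mechanism for eliminating the around-the-end excursions.

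For comparison, the paper's proof accepts that planar isotopy is not enough and pays a price in crossings: starting from a strong semimeander diagram with $C_1$ straightened and $x_1$ already on the convex hull, it deletes a small terminal subarc $A$ of $C_2$ near $x_2$ and reroutes it to a point $x_3$ on the ray through $C_1$ lying beyond the entire diagram, declaring every new intersection of the rerouted arc with the diagram to be a \emph{virtual} crossing. This is where the virtual setting is essential (and why the classical meander theorem is harder): one is free to push an endpoint out to the convex hull through the rest of the diagram because the new intersections need not be resolved as over/under crossings. Your observation that the argument ``does not inspect the nature of the crossings'' is precisely backwards for this step --- the proof must create new crossings, and it is only legitimate because they can be taken virtual.
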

\begin{proof}
    Let $K$ be a virtual knot, and let $D$ be its strong semimeander diagram composed of smooth simple arcs $C_1$ and $C_2$. We denote the common endpoints of $C_1$ and $C_2$ by $x_1$ and $x_2$. We can assume that $C_1$ is a line segment and that $x_1$ lies on the boundary of the convex hull of $D$. Now let us introduce some additional notation. 
    \begin{itemize}
        \item Let $L$ be the line containing $C_1$.
        \item Let $R$ be the ray starting at $x_1$ and containing $C_1$.
        \item Let $A$ be a small subarc of $C_2$ with an endpoint $x_2$ such that $A$ contains no crossings of $D$.
        \item Let $a$ be the endpoint of $A$ distinct from $x_2$.
        \item Let $x_3$ be any point on $R$ such that the segment $[x_1;\,x_3]$ contains $R\cap D$.
        \item Let $R' = L \setminus (x_1;\, x_3]$. 
    \end{itemize}
    See Fig.~\ref{fig: constructing meander diagram}, where some of the defined objects are presented.
    
    From a general position argument, we can assume that $R \cap C_2$ is a finite set.  
    There exists a simple arc $A'$ in $(\R^2 \setminus (C_2 \cup R')) \cup \{a\}$ with endpoints $a$ and $x_3$ such that $A'\cap [x_1;\,x_3]$ is a finite set. 
    Such an arc can always be found because $(\R^2 \setminus (C_2 \cup R')) \cup \{a\}$ is connected. 

    If we replace $A$ with the union of $A'$ and the segment $[x_2;\,x_3]$, and consider all new intersections as virtual crossings, we obtain a strong meander diagram of $K$.

\begin{figure}
\newcommand{\lenn}{0.5}
\newcommand{\mycol}{rgb:red,1;green,1;blue,4}
\newcommand{\mycoll}{rgb:red,4;green,1;blue,1}
    \centering
    \begin{tikzpicture}   
\useasboundingbox (0,-7*\lenn) rectangle (20*\lenn,4*\lenn);

\draw[line width = 1.5,color=\mycoll] (10.5*\lenn,0.4) to [out = -50, in = 90, looseness = 1] (11*\lenn,0);

\draw[line width = 3, white, double=black, double distance = 1.5] (1.49*\lenn, 0) to (2.51*\lenn, 0);
\draw[line width = 3, white, double=black, double distance = 1.5] (4.49*\lenn, 0) to (5.51*\lenn, 0);
\draw[line width = 3, white, double=black, double distance = 1.5] (6.49*\lenn, 0) to (7.51*\lenn, 0);
\draw[line width = 3, white, double=black, double distance = 1.5] (8.49*\lenn, 0) to (9.51*\lenn, 0);
    
\draw[line width = 3, white, double=\mycol, double distance = 1.5] (0,0) to [out = 90, in = 90, looseness = 1.2] (3*\lenn, 0)
    to  [out = -90, in = -90, looseness = 1.2] (13*\lenn, 0)
    to  [out = 90, in = 90, looseness = 1.2] (6*\lenn,0)
    to  [out = -90, in = -90, looseness = 1.2] (9*\lenn, 0)
    to  [out = 90, in = 90, looseness = 1.2] (10*\lenn,0)
    to  [out = -90, in = -90, looseness = 1.2] (5*\lenn, 0)
    to  [out = 90, in = 90, looseness = 1.2] (14*\lenn,0)
    to  [out = -90, in = -90, looseness = 1.2] (2*\lenn, 0)
    to  [out = 90, in = 90, looseness = 1.2] (1*\lenn,0)
    to  [out = -90, in = -90, looseness = 1.2] (15*\lenn, 0)
    to  [out = 90, in = 90, looseness = 1.2] (4*\lenn,0)
    to  [out = -90, in = -90, looseness = 1.2] (12*\lenn, 0)
    to  [out = 90, in = 90, looseness = 1.2] (7*\lenn,0)
    to  [out = -90, in = -90, looseness = 1.2] (8*\lenn, 0)
    to  [out = 90, in = 90+50, looseness = 1.2] (10.5*\lenn,0.4);

\draw[line width = 3, white, double=black, double distance = 1.5] (3.49*\lenn, 0) to (4.51*\lenn, 0);
\draw[line width = 3, white, double=black, double distance = 1.5] (5.49*\lenn, 0) to (6.51*\lenn, 0);
\draw[line width = 3, white, double=black, double distance = 1.5] (7.49*\lenn, 0) to (8.51*\lenn, 0);
    
\draw[line width = 1.5] (0,0) to (1.51*\lenn, 0);
\draw[line width = 1.5] (9.49*\lenn, 0) to (11*\lenn,0);
\draw[line width = 1.5] (2.49*\lenn, 0) to (3.51*\lenn, 0);

\draw[fill= white] (1*\lenn, 0) circle [radius = 0.07]; 
\draw[fill= white] (3*\lenn, 0) circle [radius = 0.07]; 
\draw[fill= white] (10*\lenn, 0) circle [radius = 0.07]; 

\draw[color = black, fill] (0, 0) circle [radius = 0.07]; 
\draw[color = black, fill] (11*\lenn, 0) circle [radius = 0.07];
\draw[color = black, fill] (10.5*\lenn, 0.4) circle [radius = 0.07];
\draw[color = black, fill] (18*\lenn, 0) circle [radius = 0.07];

\node (x1) at (-0.0, -0.3) {$x_1$};
\node (x2) at (11*\lenn 0, -0.3) {$x_2$};
\node (x3) at (18*\lenn 0, -0.3) {$x_3$};
\node (R) at (16.5*\lenn 0, 0.3) {$R$};
\node (A) at (11.27*\lenn 0, 0.3) {$A$};

\draw[line width = 1.2, dotted] (11*\lenn, 0) to (20*\lenn, 0);

\node (name) at (8*\lenn 0, -5.7*\lenn) {Strong semimeander diagram of $K$};

\end{tikzpicture}
\begin{tikzpicture}
\useasboundingbox (0,-6*\lenn) rectangle (20*\lenn,6*\lenn);
\draw[line width = 3, white, double=black, double distance = 1.5] (1.49*\lenn, 0) to (2.51*\lenn, 0);
\draw[line width = 3, white, double=black, double distance = 1.5] (4.49*\lenn, 0) to (5.51*\lenn, 0);
\draw[line width = 3, white, double=black, double distance = 1.5] (6.49*\lenn, 0) to (7.51*\lenn, 0);
\draw[line width = 3, white, double=black, double distance = 1.5] (8.49*\lenn, 0) to (9.51*\lenn, 0);
    
\draw[line width = 3, white, double=\mycol, double distance = 1.5] (0,0) to [out = 90, in = 90, looseness = 1.2] (3*\lenn, 0)
    to  [out = -90, in = -90, looseness = 1.2] (13*\lenn, 0)
    to  [out = 90, in = 90, looseness = 1.2] (6*\lenn,0)
    to  [out = -90, in = -90, looseness = 1.2] (9*\lenn, 0)
    to  [out = 90, in = 90, looseness = 1.2] (10*\lenn,0)
    to  [out = -90, in = -90, looseness = 1.2] (5*\lenn, 0)
    to  [out = 90, in = 90, looseness = 1.2] (14*\lenn,0)
    to  [out = -90, in = -90, looseness = 1.2] (2*\lenn, 0)
    to  [out = 90, in = 90, looseness = 1.2] (1*\lenn,0)
    to  [out = -90, in = -90, looseness = 1.2] (15*\lenn, 0)
    to  [out = 90, in = 90, looseness = 1.2] (4*\lenn,0)
    to  [out = -90, in = -90, looseness = 1.2] (12*\lenn, 0)
    to  [out = 90, in = 90, looseness = 1.2] (7*\lenn,0)
    to  [out = -90, in = -90, looseness = 1.2] (8*\lenn, 0)
    to  [out = 90, in = 130, looseness = 1.2] (10.5*\lenn,0.4);

\draw[line width = 3, white, double=black, double distance = 1.5] (3.49*\lenn, 0) to (4.51*\lenn, 0);
\draw[line width = 3, white, double=black, double distance = 1.5] (5.49*\lenn, 0) to (6.51*\lenn, 0);
\draw[line width = 3, white, double=black, double distance = 1.5] (7.49*\lenn, 0) to (8.51*\lenn, 0);
    
\draw[line width = 1.5] (0,0) to (1.51*\lenn, 0);
\draw[line width = 1.5] (2.49*\lenn, 0) to (3.51*\lenn, 0);
\draw[line width = 1.5] (9.49*\lenn, 0) to (11*\lenn,0);
\draw[line width = 1.5,color=\mycoll] (11*\lenn, 0) to (18*\lenn,0);

\draw[line width = 1.5,color=\mycoll] (10.5*\lenn,0.4) to [out = -50, in = 90, looseness = 2]  (8.5*\lenn, 0)
    to  [out = -90, in = -90, looseness = 1.2] (6.5*\lenn,0)
    to  [out = 90, in = 90, looseness = 1.2] (12.5*\lenn, 0)
    to  [out = -90, in = -90, looseness = 1.2] (3.5*\lenn,0)
    to  [out = 90, in = 90, looseness = 1.2] (18*\lenn, 0);

\draw[fill= white] (1*\lenn, 0) circle [radius = 0.07]; 
\draw[fill= white] (3*\lenn, 0) circle [radius = 0.07]; 
\draw[fill= white] (10*\lenn, 0) circle [radius = 0.07];
\draw[fill= white] (12*\lenn, 0) circle [radius = 0.07]; 
\draw[fill= white] (13*\lenn, 0) circle [radius = 0.07];
\draw[fill= white] (14*\lenn, 0) circle [radius = 0.07]; 
\draw[fill= white] (15*\lenn, 0) circle [radius = 0.07];

\draw[fill= white] (3.5*\lenn, 0) circle [radius = 0.07]; 
\draw[fill= white] (8.5*\lenn, 0) circle [radius = 0.07];
\draw[fill= white] (6.5*\lenn, 0) circle [radius = 0.07]; 
\draw[fill= white] (12.5*\lenn, 0) circle [radius = 0.07];

\draw[color = black, fill] (0, 0) circle [radius = 0.07]; 
\draw[color = black, fill] (11*\lenn, 0) circle [radius = 0.07];
\draw[color = black, fill] (10.5*\lenn, 0.4) circle [radius = 0.07];
\draw[color = black, fill] (18*\lenn, 0) circle [radius = 0.07];

\node (x1) at (-0.0, -0.3) {$x_1$};
\node (x3) at (18*\lenn 0, -0.3) {$x_3$};
\node (A) at (16*\lenn 0, 2.7*\lenn) {$A'$};

\node (name) at (8*\lenn 0, -5.7*\lenn) {Strong meander diagram of $K$};
\end{tikzpicture}
    \caption{An example of constructing a strong meander diagram from a strong semimeander one.}
    \label{fig: constructing meander diagram}
\end{figure}
\end{proof}
\begin{corollary}
    Each virtual knot has a meander diagram. 
\end{corollary}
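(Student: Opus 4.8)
The plan is to deduce this statement immediately from the preceding theorem, observing that the class of strong meander diagrams is contained in the class of meander diagrams. First I would recall the two definitions side by side: a strong meander diagram is composed of two smooth \emph{simple} arcs whose common endpoints lie on the boundary of the convex hull of the diagram, whereas a meander diagram merely requires two smooth arcs satisfying the same endpoint condition, together with the additional hypothesis that all of their self-intersections are virtual crossings.

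The key observation is that a simple arc has no self-intersections whatsoever. Hence in any strong meander diagram the requirement ``all self-intersections of these arcs are virtual crossings'' holds \emph{vacuously}, and so every strong meander diagram is, a fortiori, a meander diagram. Applying the theorem that each virtual knot admits a strong meander diagram then produces a meander diagram of the same knot, which is exactly the assertion to be proved.

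This argument is the precise analogue of the earlier deduction of the existence of semimeander diagrams from the existence of strong semimeander diagrams. I do not expect any genuine obstacle here: all of the substantive work --- in particular the construction that routes the replacement arc $A'$ through the connected set $(\R^2\setminus(C_2\cup R'))\cup\{a\}$ so as to place the two common endpoints on the boundary of the convex hull --- has already been carried out in the proof of the strong meander theorem. The corollary is therefore purely a matter of recording the inclusion of these two classes of diagrams, and its proof can be stated in a single sentence.
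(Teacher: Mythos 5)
Your proposal is correct and matches the paper's intent exactly: the corollary follows immediately from the theorem on strong meander diagrams because a simple arc has no self-intersections, so every strong meander diagram is vacuously a meander diagram. The paper leaves this one-line deduction implicit, just as it does for the semimeander case.
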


\section{Virtual $k$-arc crossing number}
For a virtual knot $K$, let us denote by $\crv(K)$ the minimum number of classical crossings among all diagrams of $K$.

\begin{definition} \label{def : k-arc diagram}
Let $K$ be a virtual knot. A diagram $D$ of $K$ is called a \emph{$k$-arc diagram} if $D$ can be composed of $k$ smooth arcs such that all their self-intersections are virtual crossings.
\end{definition}

From Theorem~\ref{thm: strong semimeander diag exists}, it follows that for each virtual knot $K$ and for each integer $k>1$, there exists a $k$-arc diagram of $K$. Note that each 1-arc diagram is a diagram of the unknot.

\begin{definition}\label{def : k-arc crossing number}
Let $K$ be a virtual knot, and let $k$ be an integer. The \emph{virtual $k$-arc crossing number} (denoted by $\crv_k(K)$) is the minimal number of classical crossings among all $k$-arc diagrams of $K$ (if there are no $k$-arc diagrams of $K$ we assume $\crv_k(K) = \infty$).
\end{definition}

Definitions~\ref{def : k-arc diagram} and \ref{def : k-arc crossing number} can be reformulated in terms of Gauss diagrams. Indeed, a diagram $D$ of a virtual knot is a $k$-arc diagram if and only if the Gauss diagram constructed from $D$ can be composed of $k$ circle arcs in such a way that each chord starts on one arc and ends on the other.

For each virtual knot $K$, we can also consider the following invariant:
$$p^*(K) = \min \{k\,:\,\crv_k(K) = \crv(K)\}.$$ Thus we have the following sequence of inequalities:
\begin{equation*}
    \crv_2(K)\geq\crv_3(K)\geq \dots \geq \crv_{p^*(K)-1}(K) > \underbrace{\crv_{p^*(K)}(K)= \crv_{p^*(K)+1}(K) =\dots}_{=\crv(K)}\ . 
\end{equation*}

For a classical knot $K$, we can consider similar invariants, where the minimum is taken among diagrams with no virtual crossings. These invariants, denoted by  $\operatorname{cr}_k(K)$, were introduced in~\cite{B20}.

\begin{thm}
For all classical knots $K$ and positive integer $k$ we have $\crv_k(K) = \operatorname{cr}_k(K)$.
\end{thm}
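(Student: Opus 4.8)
The plan is to establish the two inequalities $\crv_k(K)\le \operatorname{cr}_k(K)$ and $\crv_k(K)\ge \operatorname{cr}_k(K)$ separately. The first is immediate and purely definitional. A diagram with no virtual crossings that is composed of $k$ arcs is in particular a $k$-arc diagram in the sense of Definition~\ref{def : k-arc diagram}, and for such a diagram the number of classical crossings coincides with the total number of crossings. Hence every diagram realizing $\operatorname{cr}_k(K)$ is among the diagrams over which $\crv_k(K)$ is minimized, which gives $\crv_k(K)\le\operatorname{cr}_k(K)$ (the bound being vacuous when $\operatorname{cr}_k(K)=\infty$). The case $k=1$ is trivial, since a $1$-arc diagram exists only for the unknot, for which both quantities agree.

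For the reverse inequality I would pass to the Gauss-diagram reformulation recorded after Definition~\ref{def : k-arc crossing number}. Fix a $k$-arc diagram $D$ of $K$ with exactly $\crv_k(K)$ classical crossings, together with the associated partition of its Gauss circle into $k$ arcs in which every chord (that is, every classical crossing) joins two distinct arcs. The target is to manufacture, out of $D$, a diagram of $K$ that has no virtual crossings, is still a $k$-arc diagram, and carries at most $\crv_k(K)$ classical crossings; producing such a diagram yields $\operatorname{cr}_k(K)\le\crv_k(K)$ and completes the proof.

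This is where Manturov projection enters. I would use it as an operation $P$ on virtual diagrams with two features I would need to invoke: (i) it never creates new classical crossings, because on the Gauss diagram it acts essentially by removing a prescribed set of chords; and (ii) when its input represents a classical knot, its output is a classical (realizable) diagram of that same knot. Applying $P$ to $D$ then produces a classical diagram $P(D)$ of $K$ with at most $\crv_k(K)$ classical crossings. The decisive observation is that $P$ respects the $k$-arc structure: since every chord of $P(D)$ is already a chord of $D$, and no chord of $D$ has both endpoints on a single arc of the partition, the very same partition witnesses that $P(D)$ is again a $k$-arc diagram. Thus $P(D)$ is a classical $k$-arc diagram of $K$ with at most $\crv_k(K)$ classical crossings, so $\operatorname{cr}_k(K)\le\crv_k(K)$, and combining the two inequalities gives the theorem.

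The main obstacle is to pin down the behaviour of Manturov projection relative to the arc decomposition. Concretely, I must verify that the projection can be carried out so that it only deletes chords (turns classical crossings virtual) rather than relocating chord endpoints across the partition, and that for a classical knot the resulting reduced diagram is genuinely classical and of the correct knot type, rather than merely a simpler virtual diagram. Securing all four features simultaneously --- classicality of the output, invariance of the knot type, the crossing bound, and preservation of the $k$-arc partition --- is the crux of the argument; once these are in place, everything else is bookkeeping.
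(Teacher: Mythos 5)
Your proposal is correct and follows essentially the same route as the paper: the easy inequality is definitional, and the reverse inequality is obtained by applying Manturov's projection to a minimal $k$-arc diagram, observing that erasing chords from the Gauss diagram neither increases the number of classical crossings nor destroys the $k$-arc partition. The properties of the projection that you flag as the ``crux'' are exactly the content of Manturov's theorem, which the paper likewise invokes as a black box.
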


\begin{proof} 
Manturov projection Theorem~\cite{M1} says that there is a natural projection from the class of virtual diagrams of a knot $K$ to the class of classical diagrams of the same knot type. It consists of the sequence of operations erasing the chords in the Gauss diagram or which is the same changing some of the classical crossings to the virtual ones.

Let $K$ be a classical knot, let $D$ be its $k$-arc diagram such that the number of classical crossings in $D$ is equal to the $\crv_k(K)$, and let $C$ be the Gauss diagram corresponding to $D$. 
According to the Manturov projection theorem, we can obtain a new Gauss diagram $C'$ corresponding to a diagram of $K$ without virtual crossings by erasing chords from $C$. Since each chord in a Gauss diagram corresponds to a classical crossing, $C'$ has no more crossings than $C$. 
Moreover, if $C$ was composed of $k$ circular arcs, this property is preserved after removing a chord, so $C'$ is also composed of $k$ circular arcs (though some arcs may no longer have chords starting or ending on them). 
Therefore, we have $\crv_k(K)\geq \operatorname{cr}_k(K)$. The other inequality is obvious, so the two quantities are equal. 
%
%
\end{proof}

\begin{thm}
    For each virtual knot $K$ and integer $k\geq2$ the following inequality holds
    $$
    \crv_{k}(K) \leq \crv_{k+1}(K) + 2\frac{\left(\crv_{k+1}(K) \right)^2}{(k+1)^2}. 
    $$
\end{thm}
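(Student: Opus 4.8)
The plan is to work entirely at the level of Gauss diagrams, using the reformulation recorded after Definition~\ref{def : k-arc crossing number}: a diagram is a $k$-arc diagram precisely when its Gauss diagram admits a partition of the underlying circle into $k$ arcs with no \emph{monochromatic} chord, i.e.\ no chord having both endpoints on the same arc. Set $n=\crv_{k+1}(K)$ and fix a $(k+1)$-arc diagram realizing it; let $G$ be the associated Gauss diagram, so $G$ has exactly $n$ chords and its circle is split cyclically into arcs $a_1,\dots,a_{k+1}$ with no monochromatic chord. My goal is to produce, through a sequence of (chord-diagram) Reidemeister moves, which do not change $K$, a Gauss diagram of $K$ that admits a $k$-arc partition and carries at most $n+\frac{2n^2}{(k+1)^2}$ chords.

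First I would select the arc to be eliminated by a pigeonhole argument. The $n$ chords contribute $2n$ endpoints, distributed among the $k+1$ arcs, so some arc $a^*$ carries at most $m:=\lfloor 2n/(k+1)\rfloor$ endpoints; equivalently, at most $m$ chords are incident to $a^*$. I then want to delete one of the two junction points bounding $a^*$, thereby merging $a^*$ with an adjacent arc and reducing the number of arcs from $k+1$ to $k$. The sole obstruction is that each chord running between $a^*$ and the chosen neighbour becomes a monochromatic chord on the merged arc.

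To remove this obstruction I would \emph{clear} $a^*$ beforehand: using a sequence of Reidemeister moves modelled on the $\mathfrak{X}$-moves from the proof of Theorem~\ref{thm: strong semimeander diag exists}, I drag the endpoints lying on $a^*$ off of it, interchanging them past one another as needed, so that afterwards $a^*$ carries no endpoint at all. A chord-free arc may be merged with either neighbour without producing a monochromatic chord, which yields the required $k$-arc partition. During the clearing the only new classical crossings arise when two of the $m$ strands incident to $a^*$ are interchanged, and each such pair contributes at most one crossing, so at most $\binom{m}{2}\le \tfrac{m^2}{2}$ new chords are created. Together with the pigeonhole bound $m\le 2n/(k+1)$, the resulting Gauss diagram has at most
$$ n+\frac{m^2}{2}\ \le\ n+\frac12\left(\frac{2n}{k+1}\right)^2\ =\ n+\frac{2n^2}{(k+1)^2} $$
chords, which is exactly the asserted upper bound for $\crv_k(K)$.

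The main obstacle is the third step: making precise the local move that clears $a^*$, and checking that it really is a genuine composition of Reidemeister moves, that it raises the classical crossing count by at most one per interchanged pair (so that the total stays within $\binom{m}{2}$), and --- most delicately --- that relocating the endpoints of $a^*$ does not convert any chord of the other arcs into a monochromatic one, which would destroy the $k$-arc property globally. Controlling these side effects, and verifying that the endpoints can always be routed out of $a^*$ at the claimed cost, is where the real work lies; by contrast, the averaging step and the closing arithmetic are routine once the per-pair cost of $\binom{m}{2}$ has been secured.
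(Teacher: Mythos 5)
Your overall strategy --- pigeonhole an arc $a^*$ carrying at most $2n/(k+1)$ chord endpoints, evacuate it, and merge it with a neighbour --- is the right shape, but the two points you defer to ``where the real work lies'' are not technicalities: they are the content of the proof, and as written your argument has two genuine gaps. First, the cost accounting. In a Gauss diagram you cannot simply transpose two adjacent chord endpoints; the move realizing this (a finger/detour move, i.e.\ a Reidemeister II possibly followed by III) inserts a \emph{pair} of new chords, so each interchange of two classical endpoints costs $2$ new classical crossings, not $1$. Your bound $\binom{m}{2}\cdot 1\le m^2/2$ therefore becomes $2\binom{m}{2}\approx m^2$, which with $m\le 2n/(k+1)$ gives $4n^2/(k+1)^2$ --- twice the claimed bound. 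Second, and more seriously, the monochromatic-chord obstruction you flag really does occur and is not avoidable in your scheme: a chord joining $a^*$ to a neighbouring arc $b$ becomes monochromatic on $b$ the moment you push its $a^*$-endpoint across the junction into $b$, and this is fatal no matter which neighbour you later merge $a^*$ with; likewise, interchanging two endpoints whose chords both run to the same arc $A$ creates a classical self-crossing of $A$. So ``clearing $a^*$ completely'' cannot be done naively.

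The paper's proof is arranged precisely to dodge both problems, and it does so by \emph{not} clearing the arc. It takes the arc $J$ with the fewest classical crossings $n$ on it and its distinct neighbours $J_l,J_r$ (here $k+1\ge 3$ is used), choosing $J_r$ so that the number $m$ of classical $J$--$J_r$ crossings satisfies $m\le n/2$. Only these $m$ crossings are dangerous, because only $J$--$J_r$ chords become monochromatic when $J$ is merged with $J_r$; all other chords incident to $J$ are left in place. Each such crossing is pulled along $J$ \emph{away from} $J_r$, past the junction onto $J_l$, where it becomes a harmless $J_l$--$J_r$ crossing; processing them in order from the $J_l$ end guarantees the dragged $J_r$-strand never passes another classical $J$--$J_r$ crossing, so no classical self-crossing of $J_r$ is created. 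Each pull passes at most $n-m$ classical crossings at cost $2$ apiece, for a total of $2m(n-m)\le n^2/2\le 2\bigl(\crv_{k+1}(K)\bigr)^2/(k+1)^2$. To salvage your version you would need (i) to charge $2$ per interchange and show that a suitable choice of exit directions requires only about $m^2/4$ interchanges, and (ii) to route every chord away from the arc containing its other endpoint --- at which point you would essentially have reconstructed the paper's argument.
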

\begin{proof}
    Let $D$ be a $(k+1)$-arc diagram of $K$ such that the number of classical crossings in $D$ is equal to the $\crv_{k+1}(K)$. Let us choose a partition of $D$ into $k+1$ arcs with no classical self-crossings, and let $J$ be one of these $k+1$ arcs with the smallest number $n$ of classical crossings on it (i.\,e. $n \leq 2\frac{\crv_{k+1}(K)}{k+1}$). Since $k + 1 \geq 3$, the arcs $J_l$ and $J_r$ neighboring $J$ are distinct. Without loss of generality, we assume that the number of classical crossings $m$ between $J$ and $J_r$ does not exceed the number of classical crossings between $J$ and $J_l$ (i.\,e. $m \leq n/2$). 

    \begin{figure}[h]
    \newcommand{\loopangle}{50}
    \newcommand{\virtcrosradius}{0.06}
        \centering
        \begin{tikzpicture}
            \node (JR) at (4.3, 0.3) {$J_r$};
            \node (JL) at (0.3, 0.3) {$J_l$};
            \node (x) at (3.2, 0.25) {$x$};
            
            \node (j_l1) at (0, 0) {};
            \node (j_l2) at (1, 0) {};
            
            \node (j_top) at (2, 2.5) {};
            \node (j_top_l) at (1.3, 1.1) {};
            \node (j_top_r) at (2.7, 1.1) {};
            \node (j_2) at (3, 0) {};
            
            \node (j_r1) at (4, 0){};
            \node (j_r2) at (5, 0){};

            \node (blue1) at (0.5, 2.2) {};
            \node (blue2) at (3.5, 2.2){};
            
            \node (b1) at (3, 1) {};
            \node (b2) at (3, -0.5) {};

            \node (x1) at (3.5, 1) {};
            \node (x11) at (3.5, 0.5) {};
            \node (x2) at (3.5, -0.5) {};

        \draw[line width = 1.5, blue] (j_l1.center) to (j_l2.center);
        
        \draw[line width = 1.5] (j_l2.center) 
            to [out = 0, in = 180+\loopangle, looseness = 1] (j_top_r.center);
            
        \draw[line width = 1.5, black]  (j_top_r.center)
            to [out = \loopangle, in = 0, looseness = 1.5] (j_top.center);
        \draw [line width = 3, white, double=blue, double distance = 1.5] (blue1.center) 
           to [out = -45, in =225, looseness = 1] (blue2.center);
        \draw[line width = 3, white, double=black, double distance = 1.5]  (j_top.center)
            to [out = 180, in = 180-\loopangle, looseness = 1.5] (j_top_l.center);
        \draw[line width = 1.5, black]  (j_top_l.center)
            to [out = -\loopangle, in = 180, looseness = 1] (j_2.center)
            to (j_r1.center);
        
        \draw [line width = 1.5, red] (j_r1.center) to (j_r2.center);
        
        \draw [line width = 3, white, double=red, double distance = 1.5] (x1.center) to (x2.center);  
        \draw[fill= white] (2, 0.353) circle [radius = \virtcrosradius]; 
        \end{tikzpicture}
        \raisebox{0.8cm}{
\scalebox{0.88}{
\begin{tikzpicture}
\draw [white, ultra thick] (,0) circle [radius=0.1];;
\node(r) at (-0.15, -1.7) {}; 
\draw[->, ultra thick] (-1.5,-0.5)--(1.2,-0.5);
\end{tikzpicture}}}
        \begin{tikzpicture}
            \node (x) at (0.3, -0.25) {$x$};
            
            \node (j_l1) at (0, 0) {};
            \node (j_l2) at (1, 0) {};
            
            \node (j_top) at (2, 2.5) {};
            \node (j_top_l) at (1.3, 1.1) {};
            \node (j_top_r) at (2.7, 1.1) {};
            \node (j_2) at (3, 0) {};

            \node (r_top) at (2, 2.3) {};
            \node (r_top2) at (2, 2.8) {};
            \node (r_top_l) at (1.6, 1.1) {};
            \node (r_top_l2) at (0.9, 1.3) {};
            \node (r_top_r) at (2.4, 1.1) {};
            \node (r_top_r2) at (3.1, 1.3) {};
            
            \node (r_2) at (0.8, 0.25) {};
            \node (r_down) at (0.8, -0.25) {};
            
            \node (j_r1) at (4, 0){};
            \node (j_r2) at (5, 0){};

            \node (blue1) at (0.5, 2.2) {};
            \node (blue2) at (3.5, 2.2){};
            
            \node (b1) at (3, 1) {};
            \node (b2) at (3, -0.5) {};

            \node (x1) at (3.5, 1) {};
            \node (x11) at (3.2, 0.25) {};
            \node (x21) at (3.2, -0.25) {};
            \node (x2) at (3.5, -0.5) {};

        \draw[line width = 1.5, blue] (j_l1.center) to (j_l2.center);
        
        \draw[line width = 1.5] (j_l2.center) 
            to [out = 0, in = 180+\loopangle, looseness = 1] (j_top_r.center);           
        \draw[line width = 1.5, black]  (j_top_r.center)
            to [out = \loopangle, in = 0, looseness = 1.5] (j_top.center);
        \draw [line width = 3, white, double=blue, double distance = 1.5] (blue1.center) 
           to [out = -45, in =225, looseness = 1] (blue2.center);
        \draw[line width = 3, white, double=black, double distance = 1.5]  (j_top.center)
            to [out = 180, in = 180-\loopangle, looseness = 1.5] (j_top_l.center);
        \draw[line width = 1.5, black]  (j_top_l.center)
            to [out = -\loopangle, in = 180, looseness = 1] (j_2.center)
            to (j_r1.center);

         \draw[line width = 1.5, red]  (x1.center)
            to [out = -90, in = 0, looseness = 1] (x11.center);
        \draw[line width = 1.5, red]  (x11.center)
            to [out = 180, in = -\loopangle, looseness = 1] (r_top_l.center);
        \draw[line width = 3, white, double=red, double distance = 1.5]  (r_top_l.center)
            to [out = 180-\loopangle, in = 180, looseness = 1] (r_top.center);
        \draw[line width = 3, white, double=red, double distance = 1.5]  (r_top.center)
            to [out = 0, in =\loopangle, looseness = 1] (r_top_r.center);
        \draw[line width = 1.5, red]  (r_top_r.center)
            to [out = 180+\loopangle, in = 0, looseness = 1] (r_2.center);
        \draw[line width = 3, white, double=red, double distance = 1.5]  (r_2.center)
            to [out = 180, in = 180, looseness = 1] (r_down.center);
        
        \draw[line width = 1.5, red]  (r_down.center)
            to [out = 0, in = 180+\loopangle, looseness = 1] (r_top_r2.center);
        \draw[line width = 3, white, double=red, double distance = 1.5]  (r_top_r2.center)
            to [out = \loopangle, in = 0, looseness = 1] (r_top2.center);
        \draw[line width = 3, white, double=red, double distance = 1.5]  (r_top2.center)
            to [out = 180, in = 180-\loopangle, looseness = 1] (r_top_l2.center);
        \draw[line width = 1.5, red]  (r_top_l2.center)
            to [out = -\loopangle, in = 180, looseness = 1] (x21.center);
         \draw[line width = 1.5, red]  (x21.center)
            to [out = 0, in = 90, looseness = 1] (x2.center);
        
        \draw [line width = 1.5, red] (j_r1.center) to (j_r2.center);
        \draw[fill= white] (2, 0.353) circle [radius = \virtcrosradius]; 
        \draw[fill= white] (2, 0.7) circle [radius = \virtcrosradius]; 
        \draw[fill= white] (2, 0.15) circle [radius = \virtcrosradius]; 

        \draw[fill= white] (1.85, 0.245) circle [radius = \virtcrosradius]; 
        \draw[fill= white] (2.15, 0.245) circle [radius = \virtcrosradius]; 

        \draw[fill= white] (1.79, 0.545) circle [radius = \virtcrosradius]; 
        \draw[fill= white] (2.21, 0.545) circle [radius = \virtcrosradius]; 

        \draw[fill= white] (1.64, 0.446) circle [radius = \virtcrosradius]; 
        \draw[fill= white] (2.36, 0.446) circle [radius = \virtcrosradius]; 
        \end{tikzpicture}
        
        \caption{Pulling crossing $x$ to $J_l$}
        \label{fig: k-arc inequality}
    \end{figure}

   Now, move along $J$ starting from its endpoint adjacent to $J_l$, until reaching the first classical crossing $x$, where $J$ and $J_r$ intersect. We can use a sequence of $\mathfrak{X}$-moves to pull $x$ to $J_l$. All new crossings will lie on $J_r$. Importantly, this operation does not create any new classical crossings between $J$ and $J_r$, or new self-intersections of $J_r$ that are classical crossings. This is because all intersections between these arcs before $x$ were virtual crossings, and $\mathfrak{X}$-move through a virtual crossing adds only virtual crossings (see Fig.~\ref{fig: x-moves}). Note that the number of virtual crossings that are intersections between $J$ and $J_r$ as well as between $J_r$ and $J_r$ may increase --- see the example in Fig.~\ref{fig: k-arc inequality}. After repeating this procedure for all classical crossings between $J$ and $J_r$ (overall $m$ times), we obtain a new diagram $D'$ of $K$ in which $J$ and $J_r$ have no common classical crossings. Consequently, $D'$ is actually a $k$-arc diagram. Moreover, each $\mathfrak{X}$-move through a classical crossing adds exactly 2 classical crossings, and so we added no more than $2(n-m)$ classical crossings during the whole process. Thus, we obtain the following inequality:
    $$
    \crv_{k}(K) \leq  \crv_{k+1}(K) + 2m\left(n-m \right) \leq \crv_{k+1}(K) + \frac{n^2}{2} \leq \crv_{k+1}(K) + 2\frac{\left(\crv_{k+1}(K) \right)^2}{(k+1)^2}.
    $$    
\end{proof}
\begin{remark}
    The same inequality holds for classical $k$-arc crossing numbers, as it is proved in~\cite{B20}.
\end{remark}

\bibliography{biblio}
\bibliographystyle{alpha}
\end{document}